\newcommand{\vertiii}[1]{{\left\vert\kern-0.25ex\left\vert\kern-0.25ex\left\vert #1
    \right\vert\kern-0.25ex\right\vert\kern-0.25ex\right\vert}}
\renewcommand*\subjclass[2][2000]{%
  \def\@subjclass{#2}%
  \@ifundefined{subjclassname@#1}{%
    \ClassWarning{\@classname}{Unknown edition (#1) of Mathematics
      Subject Classification; using '1991'.}%
  }{%
    \@xp\let\@xp\subjclassname\csname subjclassname@#1\endcsname
  }%
}
\newtheorem{theorem}{Theorem}[section]
\theoremstyle{definition}
\newtheorem{remark}[theorem]{Remark}
\numberwithin{equation}{section}
\newcounter{minutes}\setcounter{minutes}{\time}
\newcounter{hours}\setcounter{hours}{\time}
\begin{document}

\title[On Heinz type inequality  and Gaussian curvature of Minimal surfaces]{On Heinz type inequality for the half-plane and Gaussian curvature of Minimal surfaces}


\author{David Kalaj}
\address{Faculty of Natural Sciences and Mathematics, University of
Montenegro, Cetinjski put b.b. 81000 Podgorica, Montenegro}
\email{davidk@ucg.ac.me}

\def\thefootnote{}
\footnotetext{ \texttt{\tiny File:~\jobname.tex,
          printed: \number\year-\number\month-\number\day,
          \thehours.\ifnum\theminutes<10{0}\fi\theminutes }
} \makeatletter\def\thefootnote{\@arabic\c@footnote}\makeatother

\footnote{2010 \emph{Mathematics Subject Classification}: Primary
53A10} \keywords{Subharmonic functions, Harmonic mappings, Minimal surfaces}
\begin{abstract}
We prove a Heinz type inequality for harmonic diffeomorphisms of of the half-plane onto itself. We then apply this result to prove some sharp bound of the Gaussian curvature of a minimal surface, provided that it lies above the whole half-plane in $\mathbf{R}^3$.
\end{abstract}
\maketitle
\tableofcontents

\section{Introduction}

The aim of this note is to prove the following  results

\begin{theorem}\label{heinznew}
Assume that $f=h+\overline{g}$ is a harmonic diffeomorphsim of the half-plane $\mathbf{U}$ onto itself with $f(a)=b$. Then the following sharp inequality holds true \begin{equation}\label{heinz}|Df(z)|=|h'(z)|+|g'(z)|\ge \frac{\Im\ (b)}{\Im\ (a)} \  \ z\in \mathbf{U}.\end{equation} In particular if $f$ has a fixed point (for example $f(i)=i$), then \begin{equation}\label{heinz23}|Df(z)|=|h'(z)|+|g'(z)|\ge 1, \  \ z\in \mathbf{U} .\end{equation}
\end{theorem}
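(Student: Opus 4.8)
The plan is to show that the analytic datum $h'-g'$ is in fact \emph{constant}, equal to $\Im b/\Im a$, after which the desired bound is merely the triangle inequality. Write $f=u+iv$ with $u=\Re f$ and $v=\Im f$. Since $f$ maps $\mathbf{U}=\{\Im z>0\}$ onto itself, $v(z)=\Im f(z)>0$ for every $z\in\mathbf{U}$, so $v$ is a positive harmonic function on $\mathbf{U}$. From $f=h+\overline g$ one computes $\partial_z v=(h'-g')/(2i)$, hence $|h'-g'|=|\nabla v|\le|h'|+|g'|=|Df|$; so it suffices to bound $|\nabla v|$ from below, and I claim $h'-g'$ is constant.

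The key step is the identity $v(z)=c\,\Im z$ for a constant $c>0$. This is where the boundary behaviour of $f$ enters: because the diffeomorphism $f$ carries $\partial\mathbf{U}=\mathbb R$ into $\partial\mathbf{U}$, the positive harmonic function $v$ has boundary value $0$ on $\mathbb R$. Extending $v$ by odd reflection across $\mathbb R$ gives, by the Schwarz reflection principle, an entire harmonic function $V$; writing $V=\Im\Phi$ with $\Phi$ entire, the vanishing of $\Im\Phi$ on $\mathbb R$ makes $\Phi$ real on $\mathbb R$, so $\Phi$ has real Taylor coefficients and satisfies $\Im\Phi=v\ge0$ on $\mathbf{U}$. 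An entire function with non‑negative imaginary part on the upper half‑plane is affine by the Nevanlinna--Herglotz representation (the representing measure must vanish for $\Phi$ to be entire), so $\Phi(z)=cz+d$ with $c\ge0$, $d\in\mathbb R$; since $v>0$ strictly, $c>0$. Thus $v(z)=c\,\Im z$, and evaluating at $z=a$ gives $c=v(a)/\Im a=\Im b/\Im a$. (Alternatively one may simply quote the classical fact that a positive harmonic function on a half‑plane that vanishes on the bounding line is a positive multiple of the distance to that line.)

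Granting this, $\Im(h-g)=v=\Im(cz)$, so $h-g-cz$ is a real constant and $h'(z)-g'(z)\equiv c=\Im b/\Im a$ on $\mathbf{U}$. Therefore
\[
|Df(z)|=|h'(z)|+|g'(z)|\ \ge\ |h'(z)-g'(z)|\ =\ \frac{\Im b}{\Im a},\qquad z\in\mathbf{U},
\]
which is \eqref{heinz}, and the case $a=b=i$ is \eqref{heinz23}. Sharpness is witnessed by the affine diffeomorphism $f_0(z)=\dfrac{\Im b}{\Im a}\,(z-\Re a)+\Re b$ of $\mathbf{U}$ onto $\mathbf{U}$: it satisfies $f_0(a)=b$ and $|Df_0|\equiv\Im b/\Im a$, so the constant is best possible.

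The main obstacle, carrying essentially all of the hypotheses, is the reduction to $v(z)=c\,\Im z$: it hinges on $v=\Im f$ genuinely attaining the boundary value $0$ along $\mathbb R$, so some control of $f$ up to $\partial\mathbf{U}$ (continuity to the closure, or $f$ being a homeomorphism of the closed half‑plane fixing the point at infinity) is indispensable — without it the statement fails, e.g. $f(z)=-1/z$ is a conformal, hence harmonic, diffeomorphism of $\mathbf{U}$ onto $\mathbf{U}$ with $f(i)=i$ while $|Df(2i)|=\tfrac14<1$, the point being that $\Im(-1/z)=\Im z/|z|^2$ does not vanish near the boundary point $0$. Once the boundary value of $v$ is pinned down, the remaining computations are routine.
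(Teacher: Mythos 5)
Your argument is, at its core, the same as the paper's: both reduce everything to the identity $\Im f(z)=c\,\Im z$, after which $h'-g'\equiv c=\Im b/\Im a$ and the bound is just the triangle inequality $|h'|+|g'|\ge |h'-g'|$. You obtain the key identity by odd reflection plus the Nevanlinna--Herglotz representation of an entire function with nonnegative imaginary part on $\mathbf{U}$; the paper instead applies the Riesz--Herglotz representation to the positive harmonic function $v=\Im f$ directly and argues that the representing measure on $\mathbf{R}$ is constant. Both routes are sound and require exactly the same input: that $v$ attains the boundary value $0$ at every finite point of $\mathbf{R}$.

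That input is the genuine gap, and you have located it exactly. A diffeomorphism of the \emph{open} half-plane onto itself carries no pointwise boundary information, and your example $f(z)=-1/z$ is a correct counterexample to the theorem as literally stated: it is a conformal, hence harmonic, diffeomorphism of $\mathbf{U}$ onto $\mathbf{U}$ with $f(i)=i$ and $|Df(2i)|=1/4<1$; in the Herglotz representation of $\Im(-1/z)=y/|z|^2$ all the mass sits at the boundary point $0$ and $c=0$. So no argument can close this gap; an extra hypothesis (continuity of $f$ up to $\overline{\mathbf{U}}$ with $\Im f\to 0$ on all of $\mathbf{R}$, equivalently no Herglotz mass at finite boundary points) is needed, and under that hypothesis your proof is complete. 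Be aware that the paper attempts to treat the discontinuous case by exhausting with $\mathbf{U}_n=\{\Im z>1/n\}$ and conformal maps $\varphi_n$ of $\mathbf{U}$ onto $f^{-1}(\mathbf{U}_n)$ claimed to converge to the identity; your example shows this reduction must fail. Indeed, for $f(z)=-1/z$ one has $f^{-1}(\mathbf{U}_n)=D(ni/2,\,n/2)$, a disk tangent to $\mathbf{R}$ at $0$: the choice $\varphi_n(z)=-1/(z+i/n)$ satisfies the stated normalization but converges to $-1/z$ rather than to the identity, while the choice $\varphi_n(z)=nz/(n-iz)$, which does converge to the identity and also meets the normalization, yields $f(\varphi_n(z))-i/n=-1/z$ again, i.e.\ an approximant with the same boundary singularity at $0$. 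So the defect you flag is not an omission in your write-up but a flaw in the statement (and in the paper's own reduction to the continuous case); your counterexample, together with the explicit extremal $f_0(z)=\frac{\Im b}{\Im a}(z-\Re a)+\Re b$ for sharpness, is the right way to document it.
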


By taking the composition $F(z)=f(a(z))$, where $a(z)=i\frac{1+z}{1-z}$, is a conformal mapping of the unit disk onto the half-plane with $a(0)=i$, Theorem~\ref{heinznew} implies the following theorem.

\begin{theorem}\label{heinznew2}
Assume that $f=h+\overline{g}$ is a harmonic diffeomorphsim of the unit disk $\mathbf{D}$ onto the half-plane $\mathbf{U}$ with $f(0)=i$. Then the following sharp inequality holds true \begin{equation}\label{heinz}|Df(z)|=|h'(z)|+|g'(z)|\ge \frac{1}{2}\mathrm{dist}(f(0),\partial \mathbf{U}).\end{equation}
\end{theorem}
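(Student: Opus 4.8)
The plan is to deduce Theorem~\ref{heinznew2} from Theorem~\ref{heinznew} by conjugating with the conformal map $a(z)=i\frac{1+z}{1-z}$, which carries $\mathbf{D}$ onto $\mathbf{U}$ with $a(0)=i$. So, given a harmonic diffeomorphism $f=h+\overline g$ of $\mathbf{D}$ onto $\mathbf{U}$ with $f(0)=i$, I would set $\Phi:=f\circ a^{-1}$. Since $a^{-1}$ is a conformal map of $\mathbf{U}$ onto $\mathbf{D}$, the composition $\Phi$ is again a harmonic diffeomorphism of $\mathbf{U}$ onto itself, and $\Phi(i)=f(a^{-1}(i))=f(0)=i$; thus $\Phi$ has the fixed point $i$. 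Theorem~\ref{heinznew} then gives $|D\Phi(w)|\ge 1$ for every $w\in\mathbf{U}$.

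The second step transports this estimate back to the disk. Writing $\Phi=H+\overline G$ with $H,G$ holomorphic on $\mathbf{U}$ and using that $a$ is holomorphic, one has $f=\Phi\circ a=(H\circ a)+\overline{G\circ a}$, so $h=H\circ a$, $g=G\circ a$, and therefore $h'(z)=H'(a(z))\,a'(z)$, $g'(z)=G'(a(z))\,a'(z)$. Consequently
\begin{equation*}
|Df(z)|=|h'(z)|+|g'(z)|=|a'(z)|\,\bigl(|H'(a(z))|+|G'(a(z))|\bigr)=|a'(z)|\,|D\Phi(a(z))|\ge |a'(z)|.
\end{equation*}
A direct computation gives $a'(z)=\dfrac{2i}{(1-z)^2}$, hence $|a'(z)|=\dfrac{2}{|1-z|^2}$; since $|1-z|\le 1+|z|<2$ for $z\in\mathbf{D}$, we get $|a'(z)|>\tfrac12$. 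Because the hypothesis $f(0)=i$ forces $\mathrm{dist}(f(0),\partial\mathbf{U})=\Im (i)=1$, this is exactly the claimed bound $|Df(z)|\ge\tfrac12\,\mathrm{dist}(f(0),\partial\mathbf{U})$. If one prefers the statement without the normalization $f(0)=i$, it suffices to precompose $\Phi$ in addition with the affine self-map $w\mapsto(w-\Re f(0))/\Im f(0)$ of $\mathbf{U}$, which scales $|D\Phi|$ by $1/\Im f(0)$ and so reproduces the factor $\mathrm{dist}(f(0),\partial\mathbf{U})$.

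For sharpness I would take $\Phi=\mathrm{id}_{\mathbf{U}}$, that is $f=a$: this is a conformal, hence harmonic, diffeomorphism of $\mathbf{D}$ onto $\mathbf{U}$ with $f(0)=i$, for which $|Df(z)|=|a'(z)|=\frac{2}{|1-z|^2}\to\tfrac12$ as $z\to -1$. Hence the constant $\tfrac12$ cannot be enlarged.

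All the estimates above are one-line computations; the only point that deserves a moment's attention — and the place I would be most careful — is the remark that precomposition by a holomorphic map preserves the splitting of $f$ into a holomorphic and an antiholomorphic part and multiplies $|Df|=|h'|+|g'|$ pointwise by $|a'|$. This is precisely what allows Theorem~\ref{heinznew} to be invoked verbatim for $\Phi$ and then pulled back to $f$.
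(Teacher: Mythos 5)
Your argument is correct and is exactly the route the paper intends (the paper gives only the one-sentence hint "take $F(z)=f(a(z))$" and no further proof): transfer to a self-map of $\mathbf{U}$ fixing $i$ via the conformal map $a$, invoke Theorem~\ref{heinznew}, and use $|a'(z)|=2/|1-z|^2>\tfrac12$ on $\mathbf{D}$. Your sharpness witness $f=a$ with $z\to -1$ correctly shows the constant $\tfrac12$ cannot be improved as a uniform bound over $z\in\mathbf{D}$.
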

\begin{remark}\label{rem}
It follows from \cite[Theorem~2.2]{complex1}, that if instead of the half-plane $\mathbf{U}$, we consider an arbitrary convex domain $\Omega$, then we get the inequality \begin{equation}\label{heinz2}|Df(z)|=|h'(z)|+|g'(z)|\ge\frac{1}{4}\mathrm{dist}(f(0),\partial \Omega).\end{equation} An better inequality under some additional conditions has been obtained in \cite{geom}. We expect that in this contexts the constant $1/4$ in \eqref{heinz2} can be replaced by $1/2$. On the other hand Heinz in \cite{HE} proved that, if $\Omega=\mathbf{D}$ (i.e. if $\Omega$ is the unit disk) then instead of $1/4$ it can be taken $1/\pi$. We also conjecture that the right constant here is $2/\pi$. Finally,  Hall in \cite{hall1} (see as well \cite{hall2}) proved the sharp estimate   $|h'(0)|+|g'(0)|\ge \frac{3\sqrt{3}}{2\pi}$ from below for the harmonic diffeomorphisms of the unit disk onto itself fixing the origin.  Hall result gives so far the best bounds of the Gaussian curvature of the minimal surfaces at the point above the center of the unit disk, provided that the minimal surface is lifter from the unit disk. The obtained constants are however not sharp, and this problem remains an open challenging problem.
\end{remark}

We say that a minimal surface $\Sigma$ is lying over a whole halp-plane $\Pi$, if its orthogonal projection to $\Pi$ is a homeomorphism of $\Sigma$ onto $\Pi$.

By using Theorem~\ref{heinznew} we present a different proof of the following theorem by Schober and Hengartner (\cite{hs})
\begin{theorem}\label{quan}
Let $\Sigma$ be a minimal surface lying over a whole half-plane $\Pi$, whose boundary is the line $L$. Let $\zeta\in \Sigma$ and let $z=z(\zeta)$ be its (orthogonal) projection to $\Pi$. If $K(\zeta)$ is the Gaussian curvature of $\Pi$ at $\zeta$, then the sharp inequality \begin{equation}\label{minimalsharp}
K(\zeta)\le \frac{1}{\mathrm{dist}^2(z(\zeta),L)}
\end{equation}
holds for every $\zeta$.
\end{theorem}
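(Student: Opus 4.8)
The plan is to reduce the statement to a conformal harmonic parametrization of $\Sigma$ and then to combine Theorem~\ref{heinznew} with the Schwarz--Pick lemma applied to the Gauss map of $\Sigma$.

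First I would normalize. After a rigid motion of $\mathbf{R}^{3}$ we may assume that $\Pi$ is the half-plane $\mathbf{U}=\{\Im w>0\}$ sitting inside $\{\xi_{3}=0\}$, with $L=\mathbf{R}$. Since $\Sigma$ is homeomorphic to the simply connected $\Pi$, it is simply connected, hence conformally $\mathbf{C}$ or $\mathbf{D}$; the case $\mathbf{C}$ is impossible, because then the orthogonal projection (composed with a conformal uniformization) would be a nonconstant harmonic map $\mathbf{C}\to\Pi$, whose second coordinate would be a positive harmonic function on the plane, hence constant. So $\Sigma$ admits a conformal (isothermal) harmonic parametrization $X=(f,F)\colon\mathbf{U}\to\Sigma$ with $F$ real and $f=h+\overline g$ equal to the orthogonal projection; after possibly reflecting in $\{\xi_{3}=0\}$ we may take $f$ to be a sense-preserving harmonic homeomorphism of $\mathbf{U}$ onto $\Pi=\mathbf{U}$, hence (harmonic homeomorphisms being diffeomorphisms) a harmonic diffeomorphism. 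I would then record the standard facts: conformality of $X$ gives the induced metric $ds_{\Sigma}^{2}=\lambda^{2}|dz|^{2}$ with $\lambda=|h'|+|g'|=|Df|$; minimality makes the dilatation $\omega=g'/h'$ the square of a holomorphic function $q$ on $\mathbf{U}$, and since $f$ is sense-preserving, $|\omega|<1$, so $q\colon\mathbf{U}\to\mathbf{D}$, with $\lambda=|h'|(1+|q|^{2})$; and, since $\log|h'|$ is harmonic, $\Delta\log\lambda=\Delta\log(1+|q|^{2})=4|q'|^{2}/(1+|q|^{2})^{2}$, so that the Gaussian curvature is
\[
-K=\frac{\Delta\log\lambda}{\lambda^{2}}=\frac{4\,|q'|^{2}}{|h'|^{2}\,(1+|q|^{2})^{4}} .
\]

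Next comes the estimate. Fix $\zeta\in\Sigma$ and put $z=X^{-1}(\zeta)$, so that $z(\zeta)=f(z)$ and $\mathrm{dist}(z(\zeta),L)=\Im f(z)=:\psi(z)$. Applying Theorem~\ref{heinznew} with $a=z$ and $b=f(z)$ gives $\lambda(z)=|Df(z)|\ge\Im f(z)/\Im z$, that is, $|h'(z)|^{2}\ge\psi(z)^{2}\big/\big((\Im z)^{2}(1+|q(z)|^{2})^{2}\big)$. Applying the Schwarz--Pick lemma to $q\colon\mathbf{U}\to\mathbf{D}$ gives $|q'(z)|\le(1-|q(z)|^{2})/(2\Im z)$, whence $4|q'(z)|^{2}/(1+|q(z)|^{2})^{2}\le(\Im z)^{-2}$. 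Substituting both bounds into the curvature formula, the powers of $1+|q|^{2}$ combine and those of $\Im z$ cancel, leaving
\[
-K(\zeta)\ \le\ \frac{(\Im z)^{2}}{\psi(z)^{2}}\cdot\frac{4|q'(z)|^{2}}{(1+|q(z)|^{2})^{2}}\ \le\ \frac{1}{\psi(z)^{2}}=\frac{1}{\mathrm{dist}^{2}(z(\zeta),L)} ;
\]
since $K(\zeta)\le 0$ for a minimal surface, this is in fact the sharp bound $|K(\zeta)|\le\mathrm{dist}^{-2}(z(\zeta),L)$, and in particular \eqref{minimalsharp} holds. For sharpness I would exhibit minimal surfaces for which the two inequalities above become equalities along a curve: take $q$ a conformal map of $\mathbf{U}$ onto $\mathbf{D}$ (so Schwarz--Pick is an equality) and choose $h$ so that $f=h+\overline g$ with $g'=q^{2}h'$ is an extremal configuration for Theorem~\ref{heinznew}; these are, up to rigid motions and dilations, the surfaces of Hengartner and Schober \cite{hs}.

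I expect the short display to be the easy part. The delicate point is the preliminary reduction: identifying the conformal type of $\Sigma$ as $\mathbf{U}$, checking that a conformal change of parameter preserves the isothermal/harmonic structure, and --- most importantly --- using the hypothesis that $\Sigma$ lies over the \emph{whole} half-plane to guarantee that $f$ is a harmonic diffeomorphism \emph{onto a half-plane}, which is exactly what licenses the application of Theorem~\ref{heinznew}.
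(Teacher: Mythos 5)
Your derivation of the inequality itself is correct and is essentially the paper's argument: normalize $\Pi=\mathbf{U}\subset\{\xi_3=0\}$, write the projection as a sense-preserving harmonic diffeomorphism $f=h+\overline g$ of $\mathbf{U}$ onto $\mathbf{U}$, use the Weierstrass--Enneper curvature formula $|K|=4|q'|^2/\bigl(|h'|^2(1+|q|^2)^4\bigr)$, bound $|q'|$ by Schwarz--Pick and $|h'|+|g'|$ by Theorem~\ref{heinznew}; the paper packages the Schwarz--Pick step into \eqref{impor} and then normalizes $a=b=i$, while you keep a general basepoint, but the two computations are identical. Your preliminary reduction (ruling out conformal type $\mathbf{C}$ via the positive harmonic coordinate) is a welcome detail the paper leaves implicit.

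The one weak spot is sharpness. Tracing your chain of inequalities, equality forces not only equality in Schwarz--Pick and in Theorem~\ref{heinznew} but also $(1-|q|^2)^2=(1+|q|^2)^2$, i.e.\ $q(z)=0$; so equality can occur only at the isolated point where $q$ vanishes, not ``along a curve'' as you assert. More importantly, pointing to ``an extremal configuration'' does not by itself produce an admissible competitor: one must check that the resulting $f$ is a univalent harmonic map onto the \emph{whole} half-plane, so that the surface genuinely lies over all of $\Pi$. The paper does this explicitly, taking $q(z)=(z-i)/(z+i)$, solving $(a-1)/(a+1)=q^2$ to get $a=(z^2-1)/(2iz)$, writing out $f$ and the height function $t$, and verifying $f(\mathbf{U})=\mathbf{U}$ and $f_{\bar z}(i)=0$, $f_z(i)=1$, so that equality holds at the point above $i$. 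Your sharpness paragraph should be replaced by (or expanded into) such a verification.
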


\section{Preliminaries}
\subsection{Weierstrass--Enneper parameterization
of minimal surface}
The projections of minimal graphs in isothermal parameters are precisely
the harmonic mappings whose dilatations are squares of meromorphic functions.
 If $\Sigma$
is a minimal surface lying over a simply connected domain $\Omega$ in
the $uv$ plane, expressed in isothermal parameters ($x$,
$y$), its projection onto
the base plane may be interpreted as a harmonic mapping $w =
f (z)$, where
$w =u +iv $ and $z =
x +iy.$ After suitable adjustment of parameters, it may be
assumed that $f$ is a sense-preserving harmonic mapping of the  $\mathbf{U}$
onto $\Omega$, with $f (i) =w_0$ for some preassigned point $w_0$ in $\Omega$. Let $f =
h +
\bar g$
be the canonical decomposition, where $h$ and $g$ are holomorphic. Then the dilatation
$\mu
=\frac{g'}{h'}$
of $f$ is an analytic
 function with  $|\mu(z)|<1$ in $\mathbf{U}$
and with the further property that $\mu =q^2$
for some function $q$ analytic in $\mathbf{U}$. The minimal surface $\Sigma$
over $\Omega$
has the
isothermal representation $F=(u,v,t)$:

$$ u = \Re f (z) = \Re \int_i^z \phi_1(\zeta)d\zeta,$$

$$ v = \Im f (z) = \Im \int_i^z \phi_2(\zeta)d\zeta,$$

$$ t = \Im \int_i^z \phi_3(\zeta)d\zeta,$$

with \begin{equation}\label{phi12}\phi_1 =h'+ g'=p(1 + q^2),\ \phi_2 = -i(h'-
g') =-ip(1 -q^2), \end{equation} and \begin{equation}\label{fi3}  \phi_3=
2ipq,\end{equation}
where $p$ and $q$ are the so-called Weierstrass-Enneper parameters. Thus \begin{equation}\label{pp}
h'= p.\end{equation}
The first fundamental form of $\Sigma$
is $$ds^2 =\lambda^2 |dz|^2,$$ where
$$\lambda^2(z)=\frac{1}{2}\sum_{1}^{3}|\phi_k|^2.$$
A direct calculation shows that
\begin{equation}\label{lambda}\lambda=|h'|+ |g'|=|p|(1 + |q|^2).\end{equation} For this fact and other important properties of minimal surfaces we refer to the book of Duren \cite{dure}.

\subsection{Gaussian curvature of Minimal Surfaces}
This simple expression \eqref{lambda} allows us to calculate the Gauss curvature of $S$ in
terms of the underlying harmonic mapping. Note that, by Lewy theorem $p(z) = h'(z) \neq  0$ in $\mathbf{U}$
since $f$ is sense-preserving. The general formula
for Gauss curvature is $$K =-\frac{\Delta \log \lambda}{\lambda^2}.$$

Therefore, in terms of the Weierstrass - Enneper parameters, the Gauss curvature
is found to be (cf. \cite{dure})
$$K = - \frac{4|q'|^2}{|p|^2(1 + |q|^2)^4}.$$
Since the underlying harmonic mapping $f$ has dilatation $\omega=g'/h'=q^2$
and $h' = p$, an equivalent expression is
$$K = -
\frac{|\omega'|^2}{|h'g'|(1+|\omega|)^4} .$$
The previous formula is suitable for using of analytic function theory to estimate
Gauss curvature.

Let $\eta$ be a conformal mapping of $\Omega$ onto $\mathbf{D}$, then the hyperbolic metric of $\Omega$ is given by $$\lambda_\Omega(z)=\frac{|\eta'(z)|}{1-|\eta(z)|^2}.$$

Since $|q(z)| <  1$, the Schwarz-Pick lemma gives
\begin{equation}\label{ineq}|q'(z)|
\le \lambda_\Omega(z)(1 - {|q(z)|^2})\ \ \  z \in\mathbf{U}.\end{equation}

Therefore, at the point of the surface that lies above $w = f (z)$, we get
the estimate
\begin{equation*}\begin{split}|K| &\le \lambda^2_\Omega(z) \frac{(1 - {|q(z)|^2})^2}
{|p(z)|^2(1 + |q(z)|^2)^4}
\\&= \lambda^2_\Omega(z)\frac{(1 - |\omega(z)|)^2}{
(|h'(z)| + |g'(z)|)^2(1 + |\omega(z)|)^2}
\\&\le \lambda^2_\Omega(z)\frac{ 1}
{(|h'(z)| + |g'(z)|)^2} .\end{split}\end{equation*}
Thus
\begin{equation}\label{impor}\begin{split}|K| \le  \frac{ \lambda^2_\Omega(z)}
{(|h'(z)| + |g'(z)|)^2} .\end{split}\end{equation}

If $\Omega$ is the unit disk then \begin{equation}\label{inedisk}\lambda_\Omega=\frac{1}{1-|z|^2}\end{equation} and thus
\begin{equation}\label{diskeq}|K| \le  \frac{ 4}
{(1-|z|^2)^2(|h'(z)| + |g'(z)|)^2} ,\ \ \  z \in\mathbf{U}.\end{equation}

The inequality  \eqref{diskeq} has been used by Hall see  \cite{hall2} (and in \cite{hall1}) to derive the bound in the unit disk setting (see Remark~\ref{rem}).
Now we assume that  $\Omega$ is the upper half-plane. Then\begin{equation}\label{ine}\lambda_\Omega=\frac{1}{2\Im(z)}\end{equation} and thus
\begin{equation}|K| \le  \frac{ 1}
{|\Im z|^2(|h'(z)| + |g'(z)|)^2}\ \ \  z \in\mathbf{U}.\end{equation}

\section{Proof of the main results}

\begin{proof}[Proof of Theorem~\ref{heinznew}]
Let $v(z)=\Im\, f(z)$. Then  $v$ is a positive harmonic function on
$\mathbf U$ and therefore, by the Riesz--Herglotz theorem (see \cite[Theorem~7.20]{ABR}), $v$ has the form $$ v(z)=cy+\pi^{-1}\int_{-\infty}^{+\infty}
P(z,t)\,d\mu(t), $$ where $c$ is a non-negative constant and $\mu$
is a non-decreasing function on $\mathbf R$ and $P$ is the Poisson
kernel, $$ P(z,t)=\frac{y}{|z-t|^2} \quad (z=x+iy\in\mathbf U, \
t\in\mathbf R).$$ Therefore \begin{equation}\label{twotwo}
\begin{split}
v(z)
&\ge cy+\pi^{-1}\int_x^{x+y}P(z,t)\,d\mu(t)\\ &\ge
cy+{\pi}^{-1}\int_x^{x+y}\frac{y}{2y^2}\,d\mu(t)\\ &=cy+
{\pi}^{-1}\frac{\mu(x+y)-\mu(x)}{y}\ge  0.
\end{split}
\end{equation}

Now assume that  $f$   is
continuos up to the boundary and in particular $v(x,y)\rightarrow
0$ as $y\rightarrow 0$ for any fixed $x\in\mathbf R.$ From this
and (\ref{twotwo}) it follows that the right derivative of $\mu$
vanishes everywhere. That the left derivative vanishes everywhere
can be proved in a similar way. Hence $\mu$ is constant, and this
proves that $v(z)=cy$   for some $c>0.$

Now $u(z)=\Re(f(z))=2\Re(k(z))$ for some holomorphic function $k$ defined on the upper half-plane. As $f=k(z)+cz+\overline{k(z)-c z}$ is locally univalent, by Lewy theorem, $$J(z,f)=|f_z|^2-|f_{\bar z}|^2=|k'(z)+c|^2-|k'(z)-c|^2=2c\,\Re (k'(z))>0.$$ Now if $a(z)=2k'(z)$, by taking into account the condition $f(i)=b$, we get $$f(z)=h+\bar g=\Re \int_i^ z a(z)dz+\Re(b)+i(\Im\ b) y$$ where $a$ is a holomorphic mapping of the upper halp-plane into the right-half plane. Then we have $$h'=\frac{1}{2}(a+\Im\ b), \ \  \ g'=\frac{1}{2}(a-\Im\ b).$$ So $$(|h'|+|g'|)^2=\frac{1}{4}(|a-\Im\ b|+|a+\Im\ b|)^2.$$
Then after some straight-forward calculations we get $$\frac{1}{4}(|a-\Im\ b|+|a+\Im\ b|)^2\ge \max\{(\Im\ b)^2,|a|^2\}\ge (\Im\ b)^2.$$

Assume now that  $f$ is not continuous up to the boundary. Then for $n\in\mathbf{N}$ let $\mathbf{U}_n=\{z\in\mathbf{U}: \Im \, z>\frac{1}{n}\}$ and let $\varphi_n(z)$ be a conformal mapping of $\mathbf{U}$ onto $f^{-1}(\mathbf{U}_n)$ so that $f(\varphi_n(i))=b+\frac{i}{n}$. Then the mapping $$f_n=\varphi_n(f(z))-\frac{i}{n}$$ is a harmonic diffeomorphism of $\mathbf{U}$ onto itself so that $f_n(i)=b$. Since $\varphi_n(z)$ converges in compacts subsets of $\mathbf{U}$ to the identity, its derivative converges in compacts subsets of $\mathbf{U}$ to the constant function $1$. So $$|Df(z)|=\lim_{n\to \infty}|Df_n(\varphi_n(z))||\varphi_n'(z)|\ge \Im(b)\lim_{n\to \infty}|\varphi_n(z)|=\Im(b).$$ This finishes the proof.
\end{proof}
\begin{proof}[Proof of Theorem~\ref{quan}]
\begin{figure}[htp]\label{f1}
\centering
\includegraphics{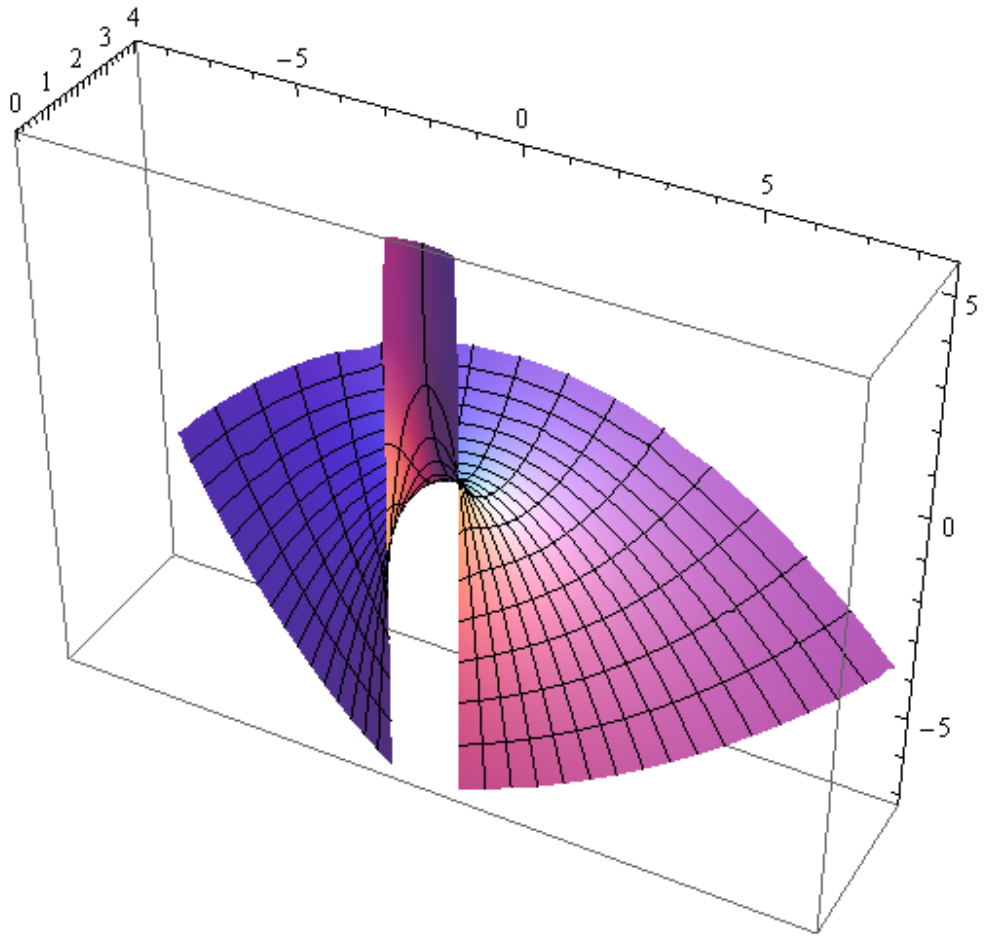}
\caption{The minimal surface over the half-plane.}
\end{figure}

Without loss of generality (after rotation if needed) we can assume that $$\Pi=\mathbf{R}^2\times {0}\cong \mathbf{C}.$$
We suppose that $\Omega = \mathbf{U}$ and $z=b\in \mathbf{U}$ is a fixed point.
Then $\Sigma$ is a minimal graph above the half-plane, and $K$ is the Gauss
curvature at the point on the surface above the basepoint $b$. The projection of $\Sigma$ is
then a harmonic mapping of $\mathbf{U}$ onto $\mathbf{U}$ with $f (i) = b$.
Further it can be assumed that $b=i$ so that $\mathrm{dist}(z,\mathbf{R})=1$.

By plugging $z=i$ and $f(i)=i$, where $i$ is the projection of $\zeta$ into  in \eqref{impor}, we get \begin{equation}\label{impor1}\begin{split}|K(\zeta)|   \le \frac{ 1}
{(|h'(i)| + |g'(i)|)^2}\le \frac{1}{1^2}=\frac{1}{\mathrm{dist}^2(b,\mathbf{R})} .\end{split}\end{equation} In order to show that the inequality is sharp, we make step by step analysis of the proof of our inequality.
Since $f=g+\overline{h}$ where $$h=\frac{1}{2}\left(\int_i^za(w)dw+z\right)$$ and $$g=\frac{1}{2}\left(\int_i^za(w)dw-z\right),$$ and since $$q(z)=\frac{z-i}{z+i}$$ is the only conformal mapping (up to the rotation) of the upper half-plane  onto the unit disk, so that $q(i)=0$ we need to solve the equation $$\frac{g'}{h'}=\frac{a-1}{a+1}=q^2(z).$$ After straight-forward calculation we get $$a=\frac{ z^2-1}{2iz}.$$
So
\begin{equation}\label{ff}f(z)=h(z)+\overline{g(z)}=\left(\Re\left[\frac{1+i  \pi +z^2-2 \log z}{4 i}\right]+i y\right).\end{equation}
where
$$h=\frac{m +z}{2}, \ \ \  g(z)=\frac{m- z}{2},$$ where $$m(z)= \frac{1+i  \pi +z^2-2 \log z}{4 i}.$$
Then $f$ maps the upper half-plane into itself and satisfies the condition $f(i)=b$ as well as \begin{equation}\label{shaka}f_z(i)=\frac{1}{2}(m'(i)+1)=1 \text {and}\ \ f_{\bar z}(i)=\frac{1}{2}(m'(i)-1)=0.\end{equation}

Since $$-2i pq=-2i h'\frac{z-i}{z+i}=2i\cdot\frac{i (i+z)^2}{4 z}\frac{z-i}{z+i}=-\frac{1+z^2}{2 z},$$ by \eqref{fi3},  the third coordinate of minimal surface laying above $\mathbf{U}$ is given by

\begin{equation}\label{tt}t(z)=-\Re\int_i^z\frac{1+\zeta^2}{2\zeta} d\zeta=\frac{1}{4} \left(-1-\Re\left[z^2\right]-2 \Re[\log z]\right).\end{equation}

The minimal surface $$\Sigma=\{(\Re\, f(z), \Im\,f(z), t(z)):z\in \mathbf{U}\}$$ is shown in Figure~1. It is a minimal surface over the halp-plane with the extremal gaussian curvature at the point above $i$, and it is \emph{the whole surface lying over $\mathbf{U}$}, because $f(\mathbf{U})=\left\{\left(\frac{1}{2} \left( x y+ \arctan \frac{x}{y}\right),y\right):y>0, x\in\mathbf{R}\right\}=\mathbf{U}.$

\end{proof}


\begin{thebibliography}{1}

\bibitem{ABR}
\textsc{S. Axler, P. Bourdon, W. Ramey,} {\it Harmonic function theory},
Springer Verlag New York 1992.

\bibitem{dure}
\textsc{P. Duren:} {\it Harmonic mappings in the plane.} Cambridge University
Press, 2004.

\bibitem{hs}
W. Hengartner, G. Schober:  \emph{Curvature Estimates for some Minimal Surfaces. In: Hersch J., Huber A. (eds) Complex Analysis.} Birkh\"auser Basel
Complex Analysis pp 87--100.

\bibitem{HE}
Heinz, Erhard {\it On one-to-one harmonic mappings. } [J] Pac. J.
Math. 9, 101-105 (1959).

\bibitem{hall2}
\textsc{R. R. Hall,}
\emph{The Gaussian curvature of minimal surfaces and Heinz' constant. }
J. Reine Angew. Math. 502, 19-28 (1998).
\bibitem{hall1}
\textsc{R. Hall:} \emph{On an inequality of E. Heinz},
Journal d'Analyse Math\'ematique
December 1982, Volume 42, Issue 1, pp 185--198.
\bibitem{osser}
\textsc{T. Finn, R. Osserman,} \emph{On the Gauss curvature of non-parametric minimal surfaces.}
J. Anal. Math. 12, 351--364 (1964).

\bibitem{complex1}
D. Kalaj,
\emph{On harmonic diffeomorphisms of the unit disc onto a convex domain.} (English)
Complex Variables, Theory Appl. 48, No. 2, 175-187 (2003).
\bibitem{geom}
D. Kalaj, \emph{A sharp inequality for harmonic diffeomorphisms of the unit disk,} J. Geom. Anal., January 2019, Volume 29, Issue 1, pp 392--401.

\end{thebibliography}
\end{document}